\documentclass[12pt]{amsart}

\usepackage{amsmath,amsfonts,amssymb,amsthm}
\usepackage{pinlabel}
\usepackage{graphicx}
\usepackage{mathtools}
\usepackage[all]{xy}
\usepackage{hyperref}
\usepackage[usenames,dvipsnames]{color}
\usepackage{color}
\xyoption{dvips}

\numberwithin{equation}{section}

\newtheorem{thm}{Theorem}[section]

\newtheorem{prop}[thm]{Proposition}
\newtheorem{lem}[thm]{Lemma}

\theoremstyle{remark}

\theoremstyle{definition}

\theoremstyle{remark}
\newtheorem{remark}[thm]{Remark}
\theoremstyle{property}

\newcommand{\F}{\mathbb{F}}

\newcommand{\MS}{\mathbb{S}}

\newcommand{\im}{\operatorname{im}}

\begin{document}

\title{Non-decomposable Lagrangian endoconcordances and Khovanov homology}

\author{Roman Golovko}

\begin{abstract}

In this note, we construct non-decomposable Lagrangian endoconcordances of
Legendrian knots. We prove that for any positive integer $k$, there exist a (stabilised) Legendrian knot  and  a Lagrangian fillable Legendrian knot 
in the standard contact $3$-sphere, each admitting at least $k$ pairwise Hamiltonian non-isotopic
relative boundary, non-decomposable Lagrangian endoconcordances. The obstruction
to decomposability that we rely on is based on Khovanov homology.
\end{abstract}

\address{Faculty of Mathematics and Physics, Charles University, Sokolovsk\'{a} 83, 18000 Praha 8, Czech Republic}
\email{golovko@karlin.mff.cuni.cz}
\date{\today}

\subjclass[2010]{Primary 53D12; Secondary 57M25}

\keywords{decomposable Lagrangian concordance, endoconcordance, Khovanov homology}

\maketitle

\section{Introduction and main results}

Exact Lagrangian cobordisms between Legendrian submanifolds in Weinstein cobordisms are central objects in modern symplectic topology. They naturally fit into the symplectic field theory framework of Eliashberg--Givental--Hofer \cite{IntroductionSFT}. They have been actively studied using various methods for more than two decades \cite{GeogrBourSabloffTraynor, CasalsGaoInfinite, LagrConcLegKnots, LagrConcNotSymRel, FlThLagCob, ObstrLagrConc2016, LegKnotsLagrCob12, EliashbergMurphyLagCaps, EtnyreLeversonribbon, Notpartordhighdim,LinExLagrCaps,KhovanovMPWLagrangian}.

Decomposable Lagrangian cobordisms form a particularly nice and useful class of  exact Lagrangian cobordisms in the symplectization of the standard contact $3$-sphere $S^3_{st}$. 
They were defined in the works of Chantraine \cite{Non-collarableSlices} and Ekholm--Honda--Kálmán \cite{LegKnotsLagrCob12}.
Recall that a Lagrangian cobordism in the symplectization of $S^3_{st}$
 is said to be decomposable if it can be presented as a concatenation of the following simple types of Lagrangian cobordisms:
\begin{itemize}
\item trace cobordisms induced by Legendrian isotopies;
\item a standard Lagrangian disk filling of the $tb=-1$ unknot;
\item pair-of-pants Lagrangian cobordisms arising from pinch moves.
\end{itemize}
Observe that every decomposable Lagrangian cobordism is automatically ribbon. In other words, it can be viewed as a Morse cobordism whose Morse function has only index $0$ and $1$ critical points.
The first examples of non-decomposable Lagrangian cobordisms between Legendrian knots were Lagrangian caps, which were studied by Lin \cite{LinExLagrCaps}. For a while, it was an open question whether every exact Lagrangian cobordism with non-empty positive end is decomposable.

The first constructions of non-decomposable Lagrangian cobordisms with non-empty positive ends are very recent \cite{NonregStabilizedConc2026, NonregFillable2025,Nondecompcobordisms2025}, and they all rely on the approximation result of Dimitroglou Rizell \cite{LagrApproxTotReal}. In all these constructions, it is crucial that the positive and negative Legendrian ends are smoothly non-isotopic. In this paper, we construct examples of non-decomposable Lagrangian endoconcordances, i.e., exact Lagrangian cobordisms diffeomorphic to cylinders whose Legendrian ends coincide not only as smooth knots but also as Legendrian knots.

We also note that, in our previous works on non-decomposable Lagrangian cobordisms with Dimitroglou Rizell \cite{NonregStabilizedConc2026, NonregFillable2025} and Komarek \cite{Nondecompcobordisms2025}, we relied on obstructions to decomposability arising from the work of Cornwell--Ng--Sivek \cite{ObstrLagrConc2016}, Agol's result showing that ribbon (or strongly homotopy-ribbon) concordances define a partial order \cite{RibbonConcPartialOrdering}, and Livingston's estimates \cite{LivingstonCritPoints2023}. In this note, the obstruction to decomposability that we use is different: it comes from the behavior of the Khovanov homology functor under ribbon concordances, as investigated by Levine--Zemke in \cite{KhovanovHomRibbon}.

In addition to constructing non-decomposable Lagrangian endoconcordances, we prove that there are stabilised Legendrian knots and Lagrangian fillable Legendrian knots whose sets of Lagrangian endoconcordances contain as many pairwise Hamiltonian non-isotopic relative boundary, non-decomposable Lagrangian endoconcordances as possible. More precisely, our main result says the following:
\begin{thm}
\label{differentendo}
For Legendrian knots in the standard contact $3$-sphere $S^3_{st}$, the following two statements hold:
\begin{itemize}
\item[(1)] For any positive integer $k$,  there is a (stabilised) Legendrian knot $\Lambda_k$ with the property that the set of Lagrangian endoconcordances of $\Lambda_k$
contains at least $k$ pairwise Hamiltonian non-isotopic relative boundary, non-decomposable Lagrangian endoconcordances.
\item[(2)] For any positive integer $k$,  there is a Lagrangian fillable Legendrian knot $\Lambda_{fil, k}$ with the property that the set of Lagrangian endoconcordances of $\Lambda_{fil, k}$
contains at least $k$ pairwise Hamiltonian non-isotopic relative boundary, non-decomposable Lagrangian endoconcordances. 
\end{itemize}
\end{thm}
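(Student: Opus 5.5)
The plan is to build the endoconcordances by concatenating a non-decomposable Lagrangian concordance with a decomposable one going back, and to detect non-decomposability through Levine--Zemke \cite{KhovanovHomRibbon}.

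\textbf{The obstruction.} If $C\colon K_-\to K_+$ is a ribbon concordance, then the Khovanov map $Kh(C)\colon Kh(K_-)\to Kh(K_+)$ is injective. Moreover, $Kh(\bar C)\circ Kh(C)=\mathrm{id}$, where $\bar C$ is the reversed concordance. Every decomposable Lagrangian cobordism is ribbon, so every decomposable endoconcordance $L$ of $\Lambda$ induces an injective map, hence an automorphism, of the finite-dimensional space $Kh(\Lambda;\F_2)$.

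\textbf{Construction.} Take a smooth concordance $C_0\colon K\to J$ that is not ribbon in the direction $K\to J$. Then take a ribbon concordance $R\colon J\to K$ such that the composite $R\circ C_0\colon K\to K$ has $Kh(R\circ C_0)$ non-injective. A natural way to arrange this is to have $Kh(R)$ be injective but not surjective, and to force $Kh(C_0)$ to kill a class. One example is $K$ the unknot and $J$ a ribbon knot with nontrivial Khovanov homology: the ribbon disk gives a concordance $J\to U$ direction problem, so I would instead use $C_0=\bar D$, the reverse of a ribbon concordance $D\colon K\to J$. Then $R\circ\bar D$ satisfies $Kh(\bar D)\circ Kh(D)=\mathrm{id}$, and choosing $R=D$-type pieces so that the image of $Kh(R)$ avoids complements is delicate. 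A cleaner route is to choose $J$ so that $\dim Kh(J)>\dim Kh(K)$, and to route the endoconcordance as $K\to J\to K$ with the first leg non-ribbon. The total map then factors through $Kh(J)$, and we need it to be non-injective.

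\textbf{Realising it Lagrangianly.} By Dimitroglou Rizell's approximation result \cite{LagrApproxTotReal}, after enough stabilisations any smooth concordance is $C^0$-close to an exact Lagrangian concordance between stabilised Legendrian representatives. Stabilise both legs compatibly and concatenate them. This gives an exact Lagrangian endoconcordance of a stabilised $\Lambda$, which proves part (1). For part (2), attach the construction on top of a fillable knot by taking a connected sum with a fillable Legendrian, for instance a max-tb positive torus knot. Fillability survives concatenating a filling with a concordance, and the Khovanov argument passes through connected sums.

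\textbf{Pairwise distinctness.} To get $k$ pairwise Hamiltonian non-isotopic endoconcordances, I would use endoconcordances $L_1,\dots,L_k$ whose induced maps on $Kh$ have distinct ranks, or are distinct linear maps. Khovanov functoriality up to sign is invariant under smooth isotopy rel boundary, hence under Hamiltonian isotopy rel boundary. A natural choice is iterates $L^{\circ i}$, or a connected sum of $i$ copies, arranged so that the ranks strictly decrease in $i$.

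\textbf{Main obstacle.} The main obstacle is producing an explicit knot pair where the composite $K\to J\to K$ provably has a non-injective Khovanov map. This requires an actual computation of the cobordism maps; the non-ribbonness of the first leg alone does not give it.
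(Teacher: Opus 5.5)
Your ingredients match the paper's: the Levine--Zemke injectivity with $Kh(\bar C)\circ Kh(C)=\mathrm{id}$, stabilised Lagrangian approximation of a reversed concordance, and distinguishing endoconcordances by the rank of the induced map. But the step you flag as the main obstacle is where the proposal breaks, and with the right routing it is not an obstacle at all. Your ``cleaner route'' $K\to J\to K$ with $\dim Kh(J)>\dim Kh(K)$ has the inequality backwards. Factoring through a larger space forces nothing. Worse, if the return leg $J\to K$ is ribbon, Levine--Zemke already gives $\dim Kh(J)\le\dim Kh(K)$, so the setup is inconsistent.

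The missing idea is to take the endoconcordance of the knot with the \emph{larger} Khovanov homology and route it through the smaller one. Your own first example, $J\xrightarrow{\bar D}U\xrightarrow{D}J$ with $J$ a nontrivial ribbon knot, already does this once you view it as an endoconcordance of $J$ rather than of $U$. In general, let $L\colon\Lambda_-\to\Lambda_+$ be decomposable (hence ribbon) with $Kh(\Lambda_-;\F_2)\ncong Kh(\Lambda_+;\F_2)$, and let $T\colon S(\Lambda_+)\to S(\Lambda_-)$ be the Lagrangian approximation of $\bar L$ after stabilisation. Then $Kh(S(L)\circ S(T))$ factors through $Kh(\Lambda_-)$. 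That space is no bigger than $Kh(\Lambda_+)$ in every bigrading (since $L$ is ribbon) and strictly smaller in one. So the composite is not injective, hence not ribbon, hence not decomposable, and no computation of cobordism maps is needed.

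For pairwise distinctness, iterates do not work. Since $S(T)$ is smoothly isotopic rel boundary to $\overline{S(L)}$, you get $Kh(S(T))\circ Kh(S(L))=\mathrm{id}$, so $Kh(S(L))\circ Kh(S(T))$ is idempotent and all iterates have the same rank. The paper instead uses a chain $U\prec\Lambda\prec\Lambda^2\prec\cdots\prec\Lambda^k$ of decomposable concordances. For each $i$ it takes the endoconcordance of $\MS(\Lambda^k)$ that descends to $\MS(\Lambda^i)$ along approximated reverses and climbs back. The descent is surjective and the ascent injective on $Kh$, so the image has dimension $\dim Kh(\Lambda^i;\F_2)=d^i/2^{i-1}$ (by Shumakovitch plus K\"unneth for reduced $Kh$). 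This strictly increases in $i$ because $d\ge 4$ by Kronheimer--Mrowka.

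Your part (2) fails as proposed. Stabilisation commutes with Legendrian connected sum, so $\Lambda_k\# T$ is still stabilised for every fillable $T$. Stabilised knots have acyclic Chekanov--Eliashberg algebra, hence no augmentations and no exact Lagrangian fillings. Concatenating a filling with your endoconcordance cannot help either, because its negative end is the same stabilised knot.

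What you need is an operation that is functorial for Lagrangian concordances yet produces fillable knots even from stabilised input. The paper applies Legendrian Whitehead doubling to the whole construction. By Bourgeois--Sabloff--Traynor, $Wh(\Lambda')$ has a genus-one filling for every Legendrian $\Lambda'$. Doubling also preserves ribbonness and the isotopy between descending legs and reversed ascending legs. The Khovanov input does not simply pass through this operation, though: one must show separately that the knots $Wh(\MS(\Lambda^i))$ have pairwise distinct $Kh$. The paper does this for $\Lambda=\Lambda_{m(9_{46})}$ by passing to Jones polynomials and the Vassiliev invariant $v_3$. It uses the Ichihara--Wu formula $v_3(Wh_t(K))=-a_2(K)+\frac{t^2-t}{2}$ together with $a_2(m(9_{46})^i)=-2i$.
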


\begin{remark}
Note that recently non-isotopic endocobordisms have been used by  Dimitroglou Rizell and Lawrence in their construction of knotted surfaces in $\mathbb C^2$ \cite{DimitroglouRizellLawrence2025}. We are not relying on that work. Dimitroglou Rizell and Lawrence do not discuss decomposability phenomenon for Lagrangian cobordisms in \cite{DimitroglouRizellLawrence2025}.  The technique used in \cite{DimitroglouRizellLawrence2025} is completely different from the technique we use in this paper.
\end{remark}

\section*{Acknowledgements}
We would like to thank Georgios Dimitroglou Rizell and Maciej Borodzik for the very helpful discussions. The author is supported by the GAČR grant 26-20231L.

\section{Non-decomposable endoconcordance}
In this section we explain how to get a non-decomposable Lagrangian endoconcordance from a decomposable Lagrangian concordance whose ends have different  Khovanov homologies over $\F_2$. The following proposition will be crucial for the proof of Thorem \ref{differentendo}.
\begin{prop}
\label{non-decompoosableendo}
Let $\Lambda_-$ and $\Lambda_+$ be two Legendrian knots with different Khovanov homologies over $\mathbb F_2$. 
In other words, suppose that there exists a pair $(i_0,j_0)$ such that $$Kh^{i_0,j_0}(\Lambda_-, \mathbb F_2)\ncong Kh^{i_0,j_0}(\Lambda_+, \mathbb F_2).$$Suppose, in addition, that there exists a decomposable Lagrangian concordance $L$ from   $\Lambda_-$ to 
$\Lambda_+$. Then there exists a non-decomposable Lagrangian endoconcordance of  $S(\Lambda_+)$, where $S(\Lambda_+)$ denotes a sufficiently stabilized Legendrian knot obtained from $\Lambda_+$. 
\end{prop}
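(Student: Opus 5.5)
Given the decomposable concordance $L$ from $\Lambda_-$ to $\Lambda_+$, the plan is to close it up into a cylinder with both ends $S(\Lambda_+)$, and then show the result cannot be decomposable because a decomposable concordance would force $Kh(\Lambda_+)$ to be a direct summand of itself through $\Lambda_-$ in a way that contradicts the hypothesis.

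\textbf{Step 1: a concordance back.} We first produce a Lagrangian concordance from $S(\Lambda_+)$ to $S'(\Lambda_-)$ for suitable stabilizations. The reversed smooth cylinder $\bar L$ is a smooth concordance from $\Lambda_+$ to $\Lambda_-$. After sufficiently many stabilizations of both ends, it is isotopic to a totally real cylinder whose ends are the stabilized Legendrians. Dimitroglou Rizell's approximation result \cite{LagrApproxTotReal} then lets us approximate it by an exact Lagrangian concordance $C$ from $S(\Lambda_+)$ to $S'(\Lambda_-)$. This is the technique of \cite{NonregStabilizedConc2026}. Stabilizations commute with decomposable cobordisms, in the sense that a decomposable $L$ yields a decomposable concordance $S'(L)$ from $S'(\Lambda_-)$ to $S'(\Lambda_+)$. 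We then adjust the number of stabilizations on each side so that the final positive end is Legendrian isotopic to $S(\Lambda_+)$; classical invariants are bookkept by choosing $S, S'$ compatibly. Concatenating $C$ with $S'(L)$ and a Legendrian isotopy trace gives a Lagrangian cylinder $E$ from $S(\Lambda_+)$ to $S(\Lambda_+)$, which is an endoconcordance. We expect this step to be the main obstacle, namely matching the stabilizations so that the ends coincide as Legendrians and verifying the totally real condition. The first thing to try is to stabilize enough that $tb$ and rotation numbers are forced equal, and then invoke the Legendrian classification of stabilized knots in a fixed smooth type with fixed classical invariants, which holds after enough stabilizations by Fuchs--Tabachnikov.

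\textbf{Step 2: obstruction.} Suppose $E$ were decomposable. Then $E$ is a ribbon concordance from $K_+$ to $K_+$, where $K_\pm$ denotes the smooth knot type of $\Lambda_\pm$ (stabilization does not change smooth type). By Levine--Zemke \cite{KhovanovHomRibbon}, the map $Kh(E)$ is injective and $Kh(K_+)$ is a direct summand. The key point is to exploit the factorization $E = S'(L)\circ C$ through $K_-$. Since $S'(L)$ is decomposable, hence ribbon from $K_-$ to $K_+$, Levine--Zemke gives an injection $Kh^{i,j}(K_-)\hookrightarrow Kh^{i,j}(K_+)$ (the maps preserve the bigrading for concordances). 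Meanwhile $Kh(E)=Kh(S'(L))\circ Kh(C)$ is injective, by the ribbon property of $E$ applied with its reverse. This makes $Kh(C)\colon Kh^{i,j}(K_+)\to Kh^{i,j}(K_-)$ injective. Over $\F_2$ in each finite-dimensional bigrading, we get $\dim Kh^{i,j}(K_+)\le \dim Kh^{i,j}(K_-)\le\dim Kh^{i,j}(K_+)$, so the two are equal for all $(i,j)$. This contradicts $Kh^{i_0,j_0}(\Lambda_-)\ncong Kh^{i_0,j_0}(\Lambda_+)$, and hence $E$ is non-decomposable.
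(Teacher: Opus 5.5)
Your proposal is correct and follows essentially the paper's route: build the backward concordance on stabilized knots by totally real approximation, then use Levine--Zemke injectivity twice, once for the composite endoconcordance (which forces $Kh(S(T))$, your $Kh(C)$, to be injective) and once for the ribbon $S(L)$, to get opposite dimension inequalities in every bigrading. The only difference is that the paper obtains $S(L)$ and $S(T)$ with one common stabilization $S$ directly from \cite[Theorem 3.1]{NonregFillable2025}. Your matching step via Fuchs--Tabachnikov is therefore unnecessary, because $L$ already forces $tb(\Lambda_-)=tb(\Lambda_+)$ and $r(\Lambda_-)=r(\Lambda_+)$, so you can simply take $S'=S$.
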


\begin{proof}
First observe that from \cite[Theorem 3.1]{NonregFillable2025} it follows that there are two Lagrangian concordances $S(L)$ from $S(\Lambda_-)$ to
$S(\Lambda_+)$ and $S(T)$ from $S(\Lambda_+)$  to $S(\Lambda_-)$. Here $S(\Lambda_{\pm})$ are obtained from $\Lambda_\pm$ by sufficiently
many stabilisations. Since stabilistions do not change the corresponding isotopy classes, we get $Kh^{i,j}(S(\Lambda_{\pm}), \mathbb F_2)=Kh^{i,j}(\Lambda_{\pm}, \mathbb F_2)$ for all $(i,j)$, and hence, in particular, $Kh^{i_0,j_0}(S(\Lambda_-), \mathbb F_2)\ncong Kh^{i_0,j_0}(S(\Lambda_+), \mathbb F_2)$. Then we consider the concatenation of Lagrangian concordances $S(L)\circ S(T)$ (meaning $S(T)$
followed by $S(L)$)which is an endoconcordance of $S(\Lambda_+)$. Assume that $S(L)\circ S(T)$ is a ribbon endoconcordance.
Then from the result of Levine--Zemke \cite[Theorem 1]{KhovanovHomRibbon} it follows that the map $$Kh^{i,j}(S(L)\circ S(T)): Kh^{i,j}(S(\Lambda_+), \mathbb F_2)\to Kh^{i,j}(S(\Lambda_+), \mathbb F_2)$$ 
is an injective homomorphism for all $(i,j)$. Now, we note that by the functoriality of Khovanov homology $Kh(S(L)\circ S(T))=Kh(S(L))\circ Kh(S(T))$. 
Since $Kh^{i,j}(S(L)\circ S(T))$ is assumed to be injective for all $(i,j)$, $Kh^{i,j}(S(T))$ must be injective as well. 
This implies that 
\begin{align}
\label{sideTinj}
\dim Kh^{i,j}(S(\Lambda_+), \mathbb F_2) \leq \dim Kh^{i,j}(S(\Lambda_-), \mathbb F_2)
\end{align}
for all $(i,j)$.
We now recall that $L$ is decomposable, and hence ribbon. Then $S(L)$ is also ribbon, and hence $Kh^{i,j}(S(L))$ is injective. This implies that 
\begin{align}
\label{sideLinj}
\dim Kh^{i,j}(S(\Lambda_-), \mathbb F_2) \leq \dim Kh^{i,j}(S(\Lambda_+), \mathbb F_2)
\end{align}
for all $(i,j)$.
We now observe that Inequalities \ref{sideTinj} and \ref{sideLinj} imply that $Kh^{i,j}(S(\Lambda_-), \mathbb F_2)\cong Kh^{i,j}(S(\Lambda_+), \mathbb F_2)$ for all $(i,j)$, which contradicts the fact that there exists $(i_0,j_0)$ such that
$Kh^{i_0,j_0}(\Lambda_-, \mathbb F_2)\ncong Kh^{i_0,j_0}(\Lambda_+, \mathbb F_2)$. Hence we see that $S(L)\circ S(T)$ is a non-ribbon endoconcordance, and therefore $S(L)\circ S(T)$ is a non-decomposable Lagrangian endoconcordance of $S(\Lambda_+)$.
\end{proof}

\begin{remark}
In Proposition \ref{non-decompoosableendo}, we assume that the ends of the decomposable Lagrangian concordance have different Khovanov homologies over $\mathbb F_2$. If, instead of Khovanov homology, we use knot Floer homology together with \cite[Theorem 1.1]{KFHribbon}, then the analogous statement involving knot Floer homology also holds.
\end{remark}

\section{Proof of Theorem \ref{differentendo}}

Let $\Lambda$ be a Legendrian knot in $S^3_{st}$ which admits a decomposable Lagrangian concordance $L_{U,\Lambda}$ from $tb=-1$ unknot $U$ to $\Lambda$. Let $\Lambda^{k}:=\Lambda\#\dots\#\Lambda$ denote the $n$-times connected sum of $\Lambda$.
Then we consider the following sequence of decomposable Lagrangian concordances: 
$$U\prec^{dec}_{L_{U,\Lambda}}\Lambda\prec^{dec}_{L_{\Lambda,\Lambda^2}}\Lambda^2\prec^{dec}_{L_{\Lambda^2,\Lambda^3}}\dots\prec^{dec}_{L_{\Lambda^{k-1},\Lambda^k}}\Lambda^k.$$

\begin{figure}[t]
\begin{center}
\vspace{3mm}
\labellist
\pinlabel $t$ at 425 775
\pinlabel $\Lambda^k$ at 335 746
\pinlabel $L_{\Lambda^{k-1},\Lambda^{k}}$ at 256 728
\pinlabel $\Lambda$ at 375 690
\pinlabel $\Lambda^{k-1}$ at 294 690
\pinlabel $L_{\Lambda^{k-2},\Lambda^{k-1}}$ at 218 672
\pinlabel $\Lambda$ at 333 632
\pinlabel $\Lambda^{k-2}$ at 251 632
\pinlabel $\Lambda$ at 290 575
\pinlabel $\Lambda^2$ at 207 575
\pinlabel $L_{\Lambda,\Lambda^2}$ at 142 560
\pinlabel $\Lambda$ at 248 519
\pinlabel $\Lambda$ at 163 519
\pinlabel $U$ at 163 463
\pinlabel $L_{U,\Lambda}$ at 120 492
\endlabellist
\includegraphics[height=320px]{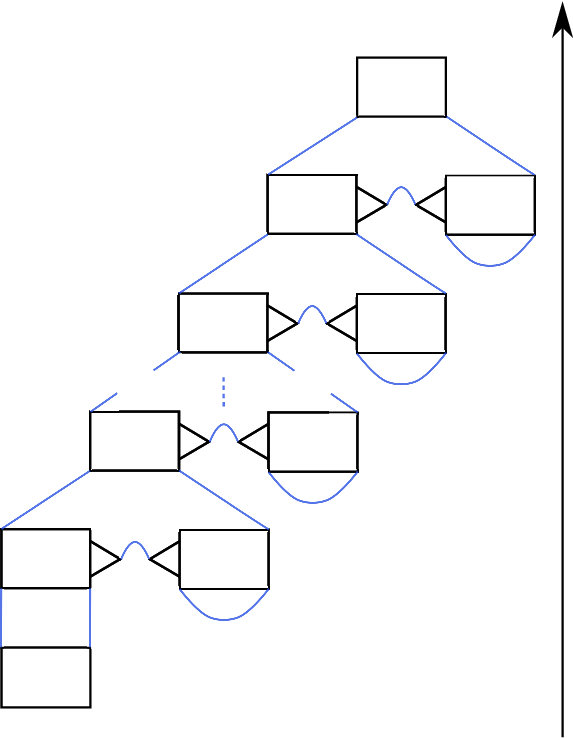}
\caption{The schematic picture of the concatenated decomposable Lagrangian concordances $L_{U,\Lambda}$ and $L_{\Lambda^{i-1},\Lambda^{i}}$, $2\leq i \leq k$.} 
\label{fig:longconcordance}
\end{center}
\end{figure}

The schematic picture of the decomposable Lagrangian concordances $L_{U,\Lambda}$ and $L_{\Lambda^{i-1},\Lambda^{i}}$, $2\leq i \leq k$, appears in Figure \ref{fig:longconcordance}. 
Then from \cite[Theorem 3.1]{NonregFillable2025} it follows that  there are Lagrangian concordances $S_1(L_{U,\Lambda})$ from $S_1(U)$ to
$S_1(\Lambda)$ and $S_1(T_{U,\Lambda})$ from $S_1(\Lambda)$ to $S_1(U)$, and Lagrangian concordances 
$S_i(L_{\Lambda^{i-1}, \Lambda^i})$ from $S_i(\Lambda^{i-1})$ to $S_i(\Lambda^i)$  and 
$S_i(T_{\Lambda^{i-1}, \Lambda^i})$ from  $S_i(\Lambda^i)$ to $S_i(\Lambda^{i-1})$, $2\leq i \leq k$. Here $S_i$'s denote multiple pairs of stabilisations that are required by \cite[Theorem 3.1]{NonregFillable2025}. 

Assume that $S_i$ consist of $m_i$ pairs of stabilisations. We now define $\mathbb S$ to be the sequence of $\max_{i\in\{1,\dots, k\}}(m_i)$ pairs of stabilisations.  Then we observe that from the fact that there are Lagrangian concordances $S_1(L_{U,\Lambda})$ and $S_1(T_{U,\Lambda})$ with 
$S_1(L_{U,\Lambda})$ being ribbon and \cite[Proposition 5.6]{LagrConcLegKnots} it follows that there are Lagrangian concordances $\MS(L_{U,\Lambda})$ and $\MS(T_{U,\Lambda})$ with $\MS(L_{U,\Lambda})$  being ribbon. Similarly, from the fact that there are Lagrangian concordances $S_i(L_{\Lambda^{i-1},\Lambda^i})$ and $S_i(T_{\Lambda^{i-1},\Lambda^i})$ with 
$S_i(L_{\Lambda^{i-1},\Lambda^i})$ being ribbon and \cite[Proposition 5.6]{LagrConcLegKnots} it follows that there are Lagrangian concordances $\MS(L_{\Lambda^{i-1},\Lambda^i})$ and $\MS(T_{L_{\Lambda^{i-1},\Lambda^i}})$ with $\MS(L_{\Lambda^{i-1},\Lambda^i})$ being ribbon. 

We then get the following two sequences of Lagrangian cobordisms:
\begin{align}\nonumber
&\MS(U)\prec_{\MS(L_{U,\Lambda})}\MS(\Lambda)\prec_{\MS(L_{\Lambda,\Lambda^2})}\MS(\Lambda^2)\prec_{\MS(L_{\Lambda^2,\Lambda^3})}\dots\prec_{\MS(L_{\Lambda^{k-1},\Lambda^k})}\MS(\Lambda^k)\ \mbox{and}\\
&\nonumber\MS(U)\succ_{\MS(T_{U,\Lambda})}\MS(\Lambda)\succ_{\MS(T_{\Lambda,\Lambda^2})}\MS(\Lambda^2)\succ_{\MS(T_{\Lambda^2,\Lambda^3})}\dots\succ_{\MS(T_{\Lambda^{k-1},\Lambda^k})}\MS(\Lambda^k).
\end{align}
We now construct the following sequence of Lagrangian concordances that we call the $i$th concordance sequence of $\MS(\Lambda^k)$, $i=0,\dots,k-1$ and denote it by $CS_i(\MS(\Lambda^k))$:
$$\MS(\Lambda^k)\prec_{\MS(T_{\Lambda^k,\Lambda^{k-1}})}\MS(\Lambda^{k-1})\dots\prec_{\MS(T_{\Lambda^{i+1},\Lambda^i})}\MS(\Lambda^i)\prec_{\MS(L_{\Lambda^{i},\Lambda^{i+1}})}\MS(\Lambda^{i+1})\dots \prec_{\MS(L_{\Lambda^{k-1},\Lambda^k})}\MS(\Lambda^k).$$
Here for simplicity we write $\MS(\Lambda^0)$ for $\MS(U)$.

In addition, we observe that from \cite[Theorem 3.1]{NonregFillable2025} and \cite[Proposition 5.6]{LagrConcLegKnots} it follows that $\MS(T_{U,\Lambda})$ is smoothly isotopic relative boundary to   $\overline{\MS(L_{U,\Lambda})}$, where  $\overline{\MS(L_{U,\Lambda})}$ denotes the reverse concordance of $L_{U,\Lambda}$. Using the same argument, we see that $\MS(T_{\Lambda^{i-1},\Lambda^i})$ is smoothly isotopic relative boundary to   $\overline{\MS(L_{\Lambda^{i-1}, \Lambda^i})}$. Hence we can say that the left part of  $CS_i(\MS(\Lambda^k))$ that starts at $\MS(\Lambda^k)$ and stops at $\MS(\Lambda^i)$ is smoothly isotopic relative boundary to the reverse of the right part of  $CS_i(\MS(\Lambda^k))$ that starts at  $\MS(\Lambda^i)$ and ends at  $\MS(\Lambda^k)$. We denote the lefts part by $LP_i(\MS(\Lambda^k))$ and the right part by $RP_i(\MS(\Lambda^k))$. The previous discussion implies that $LP_i(\MS(\Lambda^k))$ is smoothly isotopic relative boundary to  $\overline{RP_i(\MS(\Lambda^k))}$. Since  $RP_i(\MS(\Lambda^k))$ is a ribbon concordance as a concatenation of ribbon concordances, following \cite[Theorem 1]{KhovanovHomRibbon} we see that the induced map in Khovanov homology  
$$Kh(RP_i(\MS(\Lambda^k))): Kh(\MS(\Lambda^i))\to Kh(\MS(\Lambda^k))$$ is injective. 
Now since $LP_i(\MS(\Lambda^k))$ is smoothly isotopic relative boundary to  $\overline{RP_i(\MS(\Lambda^k))}$, we following 
 \cite[Theorem 1]{KhovanovHomRibbon} observe that the induced map in Khovanov homology  
$$Kh(LP_i(\MS(\Lambda^k))): Kh(\MS(\Lambda^k))\to Kh(\MS(\Lambda^i))$$ is surjective. 
Then we apply the Khovanov homology functor $Kh$ to $CS_i(\MS(\Lambda^k))$ and get the following diagram:
$$Kh(CS_i(\MS(\Lambda^k))):Kh(\MS(\Lambda^k))\twoheadrightarrow Kh(\MS(\Lambda^i))\hookrightarrow Kh(\MS(\Lambda^k)).$$
Since the first map is surjective, the second map is injective, and we consider  Khovanov homology functor over $\mathbb F_2$, observe that 
\begin{align}\label{dimdistinction}
\dim \im (Kh(CS_i(\MS(\Lambda^k))))=\dim Kh(\MS(\Lambda^i))=\dim Kh(\Lambda^i).
\end{align}
Here dimension stands for the total dimension.
We now observe that since $\Lambda$ is knotted, by the result of Kronheimer--Mrowka \cite{KronheimerMrowkaunknot} it follows that $\dim Kh(\Lambda; \mathbb F_2)\geq 4$. 
Then we need the following basic computation of the total dimension of Khovanov homology of $\Lambda^k$ over $\mathbb F_2$. 
\begin{lem}
\label{kunnethkhovanovunr}
Let $\Lambda$ be a knotted knot in $S^3$ and $\dim Kh(\Lambda; \mathbb F_2)=d$. Then the total dimension of the Khovanov homology of $\Lambda^k$ can be computed as $\dim Kh(\Lambda^k; \mathbb F_2)=\frac{d^k}{2^{k-1}}$ for $k\geq 1$, and hence $$\dim Kh(\Lambda^k; \mathbb F_2)> \dim Kh(\Lambda^i; \mathbb F_2)>\dim Kh(U; \mathbb F_2)$$
for $1\leq i\leq k-1$, $k\geq 2$.
\end{lem}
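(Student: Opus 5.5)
The plan is to use reduced Khovanov homology over $\mathbb F_2$ together with Shumakovitch's splitting. Over $\mathbb F_2$ there is an isomorphism of (bigraded, up to shift) vector spaces
\begin{align*}
Kh(K;\mathbb F_2)\cong \widetilde{Kh}(K;\mathbb F_2)\otimes \mathbb F_2[X]/(X^2),
\end{align*}
so $\dim Kh(K;\mathbb F_2)=2\dim \widetilde{Kh}(K;\mathbb F_2)$ for every knot $K$. Next, reduced Khovanov homology satisfies a K\"unneth formula for connected sums. Concretely, for knots $K_1,K_2$ we have
\begin{align*}
\widetilde{Kh}(K_1\#K_2;\mathbb F_2)\cong \widetilde{Kh}(K_1;\mathbb F_2)\otimes_{\mathbb F_2}\widetilde{Kh}(K_2;\mathbb F_2).
\end{align*}
This holds because the reduced complex of $K_1\#K_2$, with the basepoint placed on the connecting arc, is the tensor product over the ground ring of the reduced complexes of $K_1$ and $K_2$; over the field $\mathbb F_2$ the Tor terms vanish. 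Equivalently, one can use Khovanov's formula $CKh(K_1\#K_2)\cong CKh(K_1)\otimes_{A}CKh(K_2)$ with $A=\mathbb F_2[X]/(X^2)$ and reduce.

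With these two facts the computation is an induction on $k$. Set $\tilde d=\dim\widetilde{Kh}(\Lambda;\mathbb F_2)=d/2$. Then $\dim \widetilde{Kh}(\Lambda^k)=\tilde d^k$, so
\begin{align*}
\dim Kh(\Lambda^k;\mathbb F_2)=2\tilde d^k=2\left(\tfrac d2\right)^k=\tfrac{d^k}{2^{k-1}}.
\end{align*}
The case $k=1$ is trivial.

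For the inequalities, the Kronheimer--Mrowka bound $\dim Kh(\Lambda;\mathbb F_2)\geq 4$ quoted just before the lemma gives $\tilde d\geq 2$. Hence $2\tilde d^i$ is strictly increasing in $i$, which gives $\dim Kh(\Lambda^k)>\dim Kh(\Lambda^i)$ for $i<k$. Also $\dim Kh(\Lambda^i)\geq 4>2=\dim Kh(U;\mathbb F_2)$ for $i\geq1$.

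A subtlety is that Kronheimer--Mrowka's detection is stated for reduced Khovanov homology with $\mathbb Z$ or $\mathbb Q$ coefficients. To pass to $\mathbb F_2$, I would argue via universal coefficients: $\dim\widetilde{Kh}(\Lambda;\mathbb F_2)\geq \operatorname{rank}\widetilde{Kh}(\Lambda;\mathbb Z)\geq 2$, since rank is odd and greater than $1$, hence at least $3$. Alternatively, I would simply cite the paper's stated bound $d\geq4$. Parity also guarantees $\tilde d$ is an integer, consistent with Shumakovitch.

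The main obstacle I expect is justifying the connected-sum K\"unneth formula cleanly and with compatible gradings. Only total dimensions matter, so grading shifts can be ignored. I would cite Khovanov's original paper, which treats unreduced connected sums as a tensor product over $A$, and derive the reduced version, rather than reprove it.
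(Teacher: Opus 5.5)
Your proof is correct and follows the paper's argument: Shumakovitch's splitting $\dim Kh(K;\mathbb F_2)=2\dim\widetilde{Kh}(K;\mathbb F_2)$, the K\"unneth formula for reduced Khovanov homology under connected sum (giving $\tilde d^{\,k}$), and the Kronheimer--Mrowka bound to get $\tilde d\geq 2$. The paper simply cites Kronheimer--Mrowka for $d\geq 4$ over $\mathbb F_2$. Your universal-coefficients and parity argument (reduced rank odd and greater than $1$, so in fact $\tilde d\geq 3$) supplies the coefficient-change step that the paper leaves implicit.
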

\begin{proof}
We first observe that from \cite[Theorem 3.2.A, Corollary 3.2.C]{Shumakovichtorsion} it follows that $$d=\dim Kh(\Lambda; \mathbb F_2)=2\dim \widetilde{Kh}(\Lambda; \mathbb F_2),$$ and therefore $\dim \widetilde{Kh}(\Lambda; \mathbb F_2)=d/2$.
Here $\widetilde{Kh}(\Lambda^k; \mathbb F_2)$ denotes the reduced Khovanov homology of $\Lambda^k$.
Then by applying the K\"{u}nneth formula for reduced Khovanov homology \cite[Lemma 2.15]{DunfieldLipshitzSchutzKunneth} we observe that $$\dim \widetilde{Kh}(\Lambda^k; \mathbb F_2)=(\dim \widetilde{Kh}(\Lambda; \mathbb F_2))^k.$$ 
Thus we get $$\dim Kh(\Lambda^k; \mathbb F_2)=2\dim \widetilde{Kh}(\Lambda^k; \mathbb F_2)=2(\dim \widetilde{Kh}(\Lambda; \mathbb F_2))^k=2\frac{d^k}{2^k}=
\frac{d^{k}}{2^{k-1}}.$$
Now recall that from the result of Kronheimer and Mrowka \cite{KronheimerMrowkaunknot} it follows that $$\dim Kh(U; \mathbb F_2)=2\ \mbox{and}\ \dim Kh(\Lambda; \mathbb F_2)=d\geq 4,$$ that together with the computation of $\dim Kh(\Lambda^k; \mathbb F_2)$ leads to the desired inequality.
\end{proof}

We now observe that Formula \ref{dimdistinction} together with Lemma \ref{kunnethkhovanovunr}  imply that  $$\dim \im(Kh(CS_i(\MS(\Lambda^k))))=\dim Kh(\Lambda^i; \mathbb F_2)\neq \dim Kh(\Lambda^j; \mathbb F_2)=\dim \im(Kh(CS_j(\MS(\Lambda^k))))$$ for $i\neq j$.
Thus  Lagrangian concordance defined by  $CS_i(\MS(\Lambda^k)))$ is not smoothly isotopic relative boundary to the one defined by $CS_j(\MS(\Lambda^k))$ for $i\neq j$. This leads to the fact that Lagrangian concordance defined by  $CS_i(\MS(\Lambda^k)))$ is not Hamiltonian isotopic relative boundary to the one defined by $CS_j(\MS(\Lambda^k))$ for $i\neq j$. Finally, we see that  
$\Lambda_k:=\MS(\Lambda^k)$ admits $k$ pairwise Hamiltonian non-isotopic relative boundary Lagrangian concordances defined by $CS_i(\MS(\Lambda^k)))$, $i=0,\dots k-1$, that are non-decomposable by Proposition \ref{non-decompoosableendo}.  
This finishes the proof of the first part of Theorem \ref{differentendo}.

\begin{figure}[t]
\includegraphics[height=5cm]{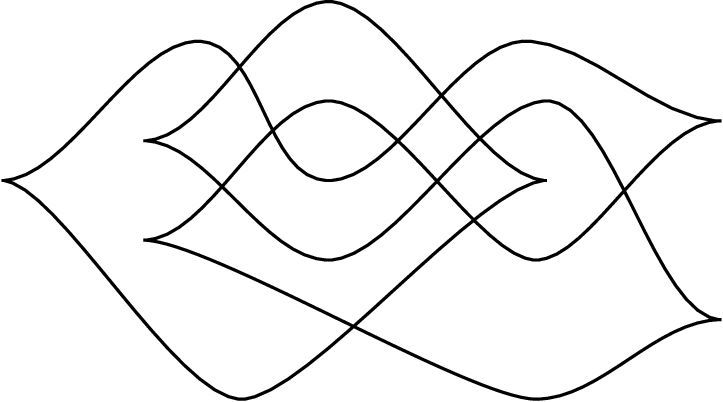}
\vspace{0.1in}
\caption{Front projection of $\Lambda_{m(9_{46})}$.}
\label{fig:m946m}
\end{figure}

Then we prove the second part of Theorem \ref{differentendo}. For that we would like to partially translate our construction from the proof of the first  part that concerns stabilised Legendrian knots to the class of Lagrangian fillable Legendrian knots. We consider $\Lambda=\Lambda_{m(9_{46})}$, the front projection of it appears in Figure \ref{fig:m946m}. Recall that there is a decomposable Lagrangian concordance from $U$ to $\Lambda$, it has been discussed, in particular, in \cite{NonregStabilizedConc2026}. 
Now, as in the proof of the first part, we construct the following sequence $CS_i(\MS(\Lambda^k))$, $i=0,\dots,k-1$, and apply the Legendrian Whitehead double to it

\begin{align*}
&Wh(\MS(\Lambda^k))\prec_{Wh(\MS(T_{\Lambda^k,\Lambda^{k-1}}))}\dots\prec_{Wh(\MS(T_{\Lambda^{i+1},\Lambda^i}))}Wh(\MS(\Lambda^i))\prec_{Wh(\MS(L_{\Lambda^{i},\Lambda^{i+1}}))}\\&\dots \prec_{Wh(\MS(L_{\Lambda^{k-1},\Lambda^k}))}Wh(\MS(\Lambda^k)).\end{align*}
We call the resulting sequence by $Wh(CS_i((\MS(\Lambda^k)))$, $i=0, \dots, k-1$. Recall that  Legendrian Whitehead doubling corresponds to taking Legendrian satteliting with the pattern $Wh$, whose front diagram appears in Figure \ref{fig:Whitehead_both}. 
From the work of Bourgeois–Sabloff–Traynor \cite[Section 5.1, Proposition 5.1]{GeogrBourSabloffTraynor} it follows that $Wh(\MS(\Lambda^i))$, $i=0,\dots,k$,  admits genus one Lagrangian filling. 
Observe that since $\MS(L_{U,\Lambda})$ and $\MS(L_{\Lambda^{i-1},\Lambda^i})$ are ribbon, $Wh(\MS(L_{U,\Lambda}))$ and $Wh(\MS(L_{\Lambda^{i-1},\Lambda^i}))$ are ribbon. Besides that, since $\MS(T_{U,\Lambda})$ is smoothly isotopic relative boundary to   $\overline{\MS(L_{U, \Lambda})}$ and $\MS(T_{\Lambda^i,\Lambda^{i+1}})$ is smoothly isotopic relative boundary to   $\overline{\MS(L_{\Lambda^i,\Lambda^{i+1}})}$, $Wh(\MS(T_{U,\Lambda}))$ is smoothly isotopic relative boundary to   $\overline{Wh(\MS(L_{U, \Lambda}))}$ and $Wh(\MS(T_{\Lambda^i,\Lambda^{i+1}}))$ is smoothly isotopic relative boundary to   $\overline{Wh(\MS(L_{\Lambda^i,\Lambda^{i+1}}))}$. 
Therefore the previous discussion implies that $LP_i(Wh(\MS(\Lambda^k)))$ is smoothly isotopic relative boundary to  $\overline{Wh(RP_i(\MS(\Lambda^k)))}$.

So, in order to apply to  $Wh(CS_i((\MS(\Lambda^k)))$ the same argument we applied above and to say using Proposition \ref{non-decompoosableendo} that $Wh(CS_i((\MS(\Lambda^k)))$ define 
pairwise Hamiltonian non-isotopic relative boundary, non-decomposable Lagrangian endoconcordances, we need to show that $Wh(\MS(\Lambda^i))$, $i=0,\dots,k-1$ have pairwise different Khovanov homologies over $\F_2$. 

\begin{figure}[t]
\includegraphics[height=1.9cm]{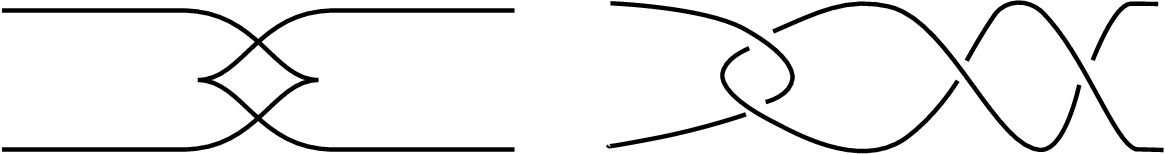}
\vspace{0.1in}
\caption{Left: front projection of the pattern for Legendrian Whitehead doubling $Wh$; right: diagram of the twisted Whitehead doubling pattern $Wh^{C^{\infty}}_{t=1}$.}
\label{fig:Whitehead_both}
\end{figure}

Now we recall that Legendrian Whitehead doubling operation $Wh$ applied to a Legendrian knot corresponds smoothly to the $t=tb$-twisted smooth Whitehead doubling $Wh^{C^{\infty}}_{t=tb}$of the given Legendrian knot, the diagram of the pattern for $t=1$ appears in Figure \ref{fig:Whitehead_both}. For more details we refer the reader to \cite{EtnyreVertesiLegSat}.
So, our goal becomes to show that $Kh(Wh_t^{C^{\infty}}(\Lambda^i);\mathbb F_2)$, $i=0,\dots,k-1$ are pairwise non-isomorphic. 

We argue the following way.  
Assume that $Kh(Wh_t^{C^{\infty}}(\Lambda^i);\mathbb F_2)$ and $Kh(Wh_t^{C^{\infty}}(\Lambda^j);\mathbb F_2)$ for $i\ne j$ are isomorphic,
then following \cite{Khovanov2000} we see that $Wh_t^{C^{\infty}}(\Lambda^i)$ and $Wh_t^{C^{\infty}}(\Lambda^j)$ have the same Jones polynomials, 
which following \cite{BirmanLinVassiliev} implies that the degree-$3$ Vassiliev invariants $v_3(Wh_t^{C^{\infty}}(\Lambda^i))$ and $v_3(Wh_t^{C^{\infty}}(\Lambda^j))$ are the same. So, instead of comparing $Kh(Wh_t^{C^{\infty}}(\Lambda^i);\mathbb F_2)$ and  $Kh(Wh_t^{C^{\infty}}(\Lambda^j);\mathbb F_2)$  directly, we will show that $v_3(Wh_t^{C^{\infty}}(\Lambda^i))$  and $v_3(Wh_t^{C^{\infty}}(\Lambda^j))$  are pairwise different for $i\neq j$.  Now, we simply need to compute the differences 
$$v_3(Wh_t^{C^{\infty}}(m(9_{46})^i))-v_3(Wh_t^{C^{\infty}}(U))\ \mbox{and}\ v_3(Wh_t^{C^{\infty}}(m(9_{46})^i))-v_3(Wh_t^{C^{\infty}}(m(9_{46})^j))$$
for $1\leq i,j\leq k-1$, $i\neq j$. 
For that we use the computation done by Ichiwara--Wu from \cite[Proposition 5.1]{IchiwaraWu} that
in our notations for a knot $K$ says that
$$v_3(Wh_t^{C^{\infty}}(K))=-a_2(K)+\frac{t^2-t}{2},$$
where $a_2$ is a  $z^2$-coeﬃcient of the Conway
polynomial of $K$.

Using \cite{Knotinfo}, we observe that $a_2(m(9_{46}))=-2$ leading to $a_2(m(9_{46})^i)=-2i$ and $a_2(U)=0$, which implies that for $i\neq j$
\begin{align*}
&v_3(Wh_t^{C^{\infty}}(m(9_{46})^i))-v_3(Wh_t^{C^{\infty}}(U))=2i+\frac{t^2-t}{2}-0-\frac{t^2-t}{2}=2i\neq 0\\
&v_3(Wh_t^{C^{\infty}}(m(9_{46})^i))-v_3(Wh_t^{C^{\infty}}(m(9_{46})^j))=2i+\frac{t^2-t}{2}-2j-\frac{t^2-t}{2}=2(i-j)\neq 0
\end{align*}
Since $v_3(Wh_t^{C^{\infty}}(m(9_{46})^i))-v_3(Wh_t^{C^{\infty}}(U))$ and $v_3(Wh_t^{C^{\infty}}(m(9_{46})^i))-v_3(Wh_t^{C^{\infty}}(m(9_{46})^j))$ do not depend on $t$ and do not vanish for $1\leq i,j\leq k-1$, $i\neq j$, we conclude that $Kh(Wh(\MS(\Lambda^i));\mathbb F_2)$, $i=0,\dots, k-1$, are pairwise non-isomorphic. Then by applying Proposition \ref{non-decompoosableendo} 
we get the desired conclusion for $\Lambda_{fil, k}=Wh(\MS(\Lambda^k)$. This finishes the proof.

\color{black}

\end{document}